\theoremstyle{plain}
\newtheorem{thm}{Theorem}[section]
\newtheorem*{thm*}{Theorem}
\newtheorem{prop}[thm]{Proposition}
\newtheorem{lem}[thm]{Lemma}
\theoremstyle{definition}
\newtheorem{ex}{Example}[section]
\theoremstyle{remark}
\newcommand{\vol}{\operatorname{vol}}
\newcommand{\diam}{\operatorname{diam}}
\newcommand{\dist}{\mathop{\mathit{d}} \nolimits}
\title[Domain monotonicity]{A note on domain monotonicity for the Neumann eigenvalues of the
Laplacian}
\author{Kei Funano}
\address{Division of Mathematics \& Research Center for Pure and Applied Mathematics, Graduate School of Information Sciences, Tohoku University, 6-3-09 Aramaki-Aza-Aoba, Aoba-ku, Sendai 980-8579, Japan}
\email{kfunano@tohoku.ac.jp}
\subjclass[2010]{53C21, 53C23}
\keywords{Boundary concentration; Eigenvalues of the Laplacian; Convex sets}
\date{}
\begin{document}
\maketitle

\begin{abstract}Given a convex domain and a convex subdomain we prove
 a variant of domain monotonicity for the Neumann
 eigenvalues of the Laplacian. As an application of our method we also
 obtain an upper bound for Neumann eigenvalues of the Laplacian of a
 convex domain.
\end{abstract}

\section{Introduction}

Let $\Omega$ be a bounded domain in $\mathbb{R}^n$ with piecewise smooth
 boundary. For the Neumann eigenvalues
 $0=\lambda_0^N(\Omega)<\lambda^N_1(\Omega)\leq \lambda^N_2(\Omega)\leq
 \cdots \leq \lambda^N_k(\Omega)\leq \cdots$ of the Laplacian on
 $\Omega$ we prove a variant of domain monotonicity: 
\begin{thm}\label{DMthm}There exists a universal constant $C>0$ such that for any two
bounded convex domains $\Omega\subseteq \Omega'$ in $\mathbb{R}^n$ with piecewise smooth boundaries
the eigenvalues of their Neumann eigenvalues satisfies
\begin{equation*}
\lambda_k^N(\Omega')\leq C n^2 \lambda_k^N(\Omega).
\end{equation*}
\end{thm}The constant in the above theorem can be chosen as $C=(92)^2$.

The following example (for $p=1$) indicates the sharpness of the above inequality with respect to the order of $n$.

\begin{ex}\label{ex:intro2}\upshape Let $p\in [\,1,2\,]$ and $B_p^n$
  be the $n$-dimensional $\ell_p$-ball centered at the origin. Suppose that
  $r_{n,p}$ is the positive number such that $\vol(r_{n,p}B_p^{n})=1$
  and set $\Omega':=r_{n,p}B_p^n$. Then $r_{n,p}\sim n^{1/p}$ and
  $\lambda_1^N(\Omega')\geq c$ for some absolute constant $c>0$
  (\cite[Section 4 (2)]{sodin}). If the segment in $\Omega'$ connecting
  the origin and $(r_{n,p},0,0,\cdots,0)$ is approximated by a convex
  domain $\Omega$ in $\Omega'$ then $\lambda_1^N(\Omega)\sim r_{n,p}^{-2}\sim
  n^{-2/p}$.
  \end{ex}
  We remark that domain monotonicity for the Dirichlet eigenvalues
  $\lambda_1^D(\Omega)\leq \lambda_2^D(\Omega)\leq \cdots \leq
  \lambda_k^D(\Omega)\leq \cdots$ is an easy consequence of the Courant
  minimax principle (\cite{C}). Actually in this case we have
  $\lambda_k^D(\Omega')\leq \lambda_k^D(\Omega)$ for any two bounded
  domains $\Omega\subseteq \Omega'$. An example of a ball and its dumbbell like subdomain shows a sort of convexity cannot be avoided in the assumption of
  the above theorem.

  In \cite{F} the author proved that $\lambda_k^N(\Omega')\leq C
  (n\log k)^2 \lambda_{k-1}^N(\Omega)$ for some universal constant $C>0$ under the same assumption of the
  above theorem. In particular this implies
  \begin{align*}
   \lambda_k^N(\Omega')\leq C
  (n\log k)^2 \lambda_{k}^N(\Omega).
   \end{align*}Theorem \ref{DMthm} removes the $\log k$ factor and thus
  improves this inequality.

  As a byproduct of the our method we obtain an upper bound estimate of Neumann
  eigenvalues of the Laplacian concerning the P\'olya conjecture. See
  Section \ref{UPNE}.
\section{Preliminaries}\label{sec:Dirichlet eigenvalues}

Let $\Omega$ be a bounded domain in a Euclidean space with piecewise smooth
    boundary and $\{\Omega_i\}_{i=0}^{l}$ be a finite partition of $\Omega$ by subdomains;
    $\Omega=\bigcup_i \Omega_i$ and $\vol (\Omega_i\cap \Omega_j)=0$ for different $i\neq j$. The following proposition was due to Buser \cite[8.2.1 Theorem]{B}. See \cite{G} for an weaker form and also \cite[Proposition 6.1]{FS2} for
    generalization.
\begin{prop}[{\cite{B}}]\label{lower}Under the above situation, we have
\begin{align*}
    \lambda_{l+1}^N(\Omega)\geq \min_{i=0,1,\cdots,l}\lambda_1^N(\Omega_i).
\end{align*}
 \end{prop}
 We use the following relation between diameter and the first positive Neumann eigenvalue of the Laplacian under the convexity assumption. 

  \begin{prop}[{\cite[(1.2)]{PW}}]\label{emil:prop}Let $\Omega$ be a bounded convex domain in a Euclidean
    space. Then we have
    \begin{align*}
    \lambda_1^N(\Omega) \geq \frac{\pi^2 }{(\diam \Omega)^2}.
     \end{align*}
    \end{prop}

  From Proposition \ref{lower} we can
  obtain a lower bound of eigenvalues of the Laplacian once we give a
  partition. In the proof of Theorem \ref{DMthm} we use the Voronoi
  partition to get the lower bound of $\lambda_k^N(\Omega)$.
 
  Let $X$ be a metric space and $\{x_i\}_{i\in I}$ be a subset of
  $X$. For each $i\in I$ we define the \emph{Voronoi cell} $C_i$
  associated with the point $x_i$ as
  \begin{align*}
   C_i:= \{x\in X \mid \dist(x,x_i)\leq \dist(x,x_j) \text{ for all
   }j\neq i   \}.
   \end{align*}Note that if $X$ is a bounded convex domain $\Omega$ in a
   Euclidean space then $\{C_i\}_{i\in I}$ is a convex partition of
   $\Omega$ (the boundaries $\partial C_i$ may overlap each
   other). Observe also that if the balls $\{ B(x_i,r)\}_{i\in I}$ of
   radius $r$ covers $\Omega$ then $C_i \subseteq B(x_i,r)$, and thus
   $\diam C_i \leq 2r$ for any $i\in I$.

   \section{Proof of Theorem \ref{DMthm}}
   The following lemma is a key to prove Theorem \ref{DMthm}. 
\begin{lem}\label{keylemma}Let $\Omega$ be a bounded convex domain in $\mathbb{R}^n$
 with a piecewise smooth boundary. Given $r>0$ suppose that $\{
 x_i\}_{i=0}^{l}$ is an $r$-separated set in $\Omega$. Then
\begin{align*}
r\leq \frac{45 n}{\sqrt{\lambda_l^N(\Omega)}}.
\end{align*}
 \end{lem}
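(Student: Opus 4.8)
The plan is to bound $\lambda_l^N(\Omega)$ from above by the min--max principle \cite{C}. Recall that
\[
\lambda_l^N(\Omega)=\min_{\substack{V\subseteq H^1(\Omega)\\ \dim V=l+1}}\ \max_{f\in V\setminus\{0\}}\frac{\int_\Omega|\nabla f|^2}{\int_\Omega f^2},
\]
so it is enough to produce an $(l+1)$-dimensional subspace $V\subseteq H^1(\Omega)$ on which the Rayleigh quotient is $\lesssim n^2/r^2$. Since $\{x_i\}_{i=0}^l$ is $r$-separated, the balls $B(x_i,r/2)$, $i=0,\dots,l$, are pairwise disjoint, and I will attach to each $x_i$ a Lipschitz cut-off $\phi_i$ supported in $B(x_i,r/2)$; the restrictions $\phi_i|_\Omega$ lie in $H^1(\Omega)$ (for the Neumann problem no boundary condition is imposed on test functions) and, having pairwise disjoint supports, are linearly independent. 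Note that only $r$-separation of the net is used here, not its maximality.

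The choice of cut-off is where the factor $n^2$ (rather than an exponential factor) enters. Set $\delta:=r/n$ and take $\phi_i(x):=\min\{1,\delta^{-1}(r/2-|x-x_i|)_+\}$, so that $\phi_i\equiv 1$ on $B(x_i,r/2-\delta)$, $\phi_i\equiv 0$ off $B(x_i,r/2)$, and $|\nabla\phi_i|\le n/r$. Then $\int_\Omega|\nabla\phi_i|^2\le (n/r)^2\vol\bigl(\Omega\cap B(x_i,r/2)\bigr)$ while $\int_\Omega\phi_i^2\ge \vol\bigl(\Omega\cap B(x_i,r/2-\delta)\bigr)>0$. The key geometric input is a convex scaling estimate: for a convex set $\Omega$, a point $x\in\Omega$, and $0<s_1\le s_2$, radial scaling about $x$ by the factor $s_2/s_1$ maps $\Omega\cap B(x,s_1)$ onto a set containing $\Omega\cap B(x,s_2)$ — indeed if $y\in\Omega\cap B(x,s_2)$ then $x+\tfrac{s_1}{s_2}(y-x)$ lies on the segment $[x,y]\subseteq\Omega$ and within distance $s_1$ of $x$. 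Hence
\[
\vol\bigl(\Omega\cap B(x_i,r/2)\bigr)\le\Bigl(\tfrac{r/2}{r/2-\delta}\Bigr)^n\vol\bigl(\Omega\cap B(x_i,r/2-\delta)\bigr)=\Bigl(\tfrac{n}{n-2}\Bigr)^n\vol\bigl(\Omega\cap B(x_i,r/2-\delta)\bigr).
\]
Since $\bigl(\tfrac{n}{n-2}\bigr)^n$ is bounded (by $27$, tending to $e^2$), this gives $\int_\Omega|\nabla\phi_i|^2\lesssim n^2 r^{-2}\int_\Omega\phi_i^2$, i.e.\ each $\phi_i$ has Rayleigh quotient $\lesssim n^2/r^2$. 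For $n\le 2$ one simply replaces $\delta$ by $r/4$ and gets the even better bound $\lesssim 1/r^2\le n^2/r^2$.

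To finish, because the $\phi_i$ have pairwise disjoint supports, for $f=\sum_i a_i\phi_i\in V:=\operatorname{span}\{\phi_0,\dots,\phi_l\}$ all cross terms vanish, so
\[
\frac{\int_\Omega|\nabla f|^2}{\int_\Omega f^2}=\frac{\sum_i a_i^2\int_\Omega|\nabla\phi_i|^2}{\sum_i a_i^2\int_\Omega\phi_i^2}\le\max_i\frac{\int_\Omega|\nabla\phi_i|^2}{\int_\Omega\phi_i^2}\lesssim\frac{n^2}{r^2}.
\]
As $\dim V=l+1$, the min--max principle yields $\lambda_l^N(\Omega)\lesssim n^2/r^2$, which rearranges to the claim $r\lesssim n/\sqrt{\lambda_l^N(\Omega)}$ (the case $l=0$ being vacuous).

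I expect the only genuine subtlety to be the calibration $\delta\asymp r/n$ of the transition width: if the transition is too wide, $\phi_i$ never reaches $1$ inside $B(x_i,r/2)$; if it is too narrow, the volume ratio $\bigl(\tfrac{r/2}{r/2-\delta}\bigr)^n$ blows up. Balancing these two effects against the convex scaling estimate is precisely what produces the stated $n$-dependence, while the remaining steps (membership in $H^1$, disjointness of supports, the min--max estimate) are routine.
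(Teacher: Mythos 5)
Your proof is correct, and it takes a genuinely different route from the paper's. The paper proves the lemma by applying the (mixed Dirichlet--Neumann) domain monotonicity to slightly thickened balls $\widetilde{B}_i$ around the net points, bounding $\lambda_l^N(\Omega)$ by $\max_i \nu_1(\widetilde{B}_i)$, and then using the boundary concentration inequality (its Lemmas on $\mu(\Omega\setminus B_r(\partial\Omega))\le e^{1-\sqrt{\lambda_1^D}r}$ and its mixed-boundary analogue) together with Bishop--Gromov to force $r\lesssim n/\sqrt{\nu_1(\widetilde{B}_{i_0})}$. You instead bound $\lambda_l^N(\Omega)$ directly via min--max with $l+1$ disjointly supported Lipschitz bumps, and the entire difficulty is packed into the calibration $\delta=r/n$ of the transition layer so that the volume ratio coming from the convex scaling estimate --- which is exactly the Bishop--Gromov inequality for convex sets, proved here by a one-line contraction argument rather than via Brunn--Minkowski --- stays bounded. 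Both arguments hinge on the same geometric input (Bishop--Gromov for convex bodies), but yours avoids the mixed eigenvalue problem and the boundary concentration machinery entirely, making it more elementary and self-contained. Your proof also only uses $r$-separation and never maximality, so it proves a slightly stronger statement; this is harmless, since Theorem \ref{DMthm} only invokes the lemma for maximal nets. The $n\le 2$ caveat you flag (take $\delta=r/4$ so the plateau is nonempty) is the right fix. One cosmetic remark: since the $\phi_i$ are positive precisely on the open balls $B(x_i,r/2)$ and $r$-separation makes these open balls pairwise disjoint, the disjointness of supports and linear independence you need are immediate, exactly as you say.
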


 To prove Lemma \ref{keylemma} we use the boundary concentration
 inequality established in \cite{FS,FS2} and a variant of it. The
 boundary concentration inequality is an analogue of the exponential concentration
 inequality due to Gromov and Milman (\cite{GM}).

 For a
   subset $A$ of a metric space $X$ and $r>0$, $B_r(A)$ denotes the
   closed $r$-neighborhood of $A$.

\begin{lem}[{Boundary concentration inequality, \cite[Proposition
 2.1]{FS}}]\label{Blem1}Let $\Omega$ be a (not necessarily convex) bounded domain in
 $\mathbb{R}^n$ with a piecewise smooth boundary and let $\mu$ be the
 uniform probability measure on $\Omega$. For any $r>0$ we have
\begin{align*}
    \mu(\Omega \setminus B_r(\partial \Omega))\leq \exp\Big(1-\sqrt{\lambda_1^D(\Omega)}r\Big).
\end{align*}
\end{lem}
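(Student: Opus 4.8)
The plan is to use, as a test function in the Rayleigh quotient for $\lambda_1^D(\Omega)$, a truncated exponential of the distance‑to‑$\partial\Omega$ function, and then to reorganize the resulting estimate into a Gronwall‑type differential inequality for the tail $V(r):=\mu(\Omega\setminus B_r(\partial\Omega))$. Write $s:=\sqrt{\lambda_1^D(\Omega)}>0$ and $g(x):=\dist(x,\partial\Omega)$. Since $\{g=r\}$ carries no volume for $r>0$ (a Lipschitz function has vanishing gradient a.e.\ on a level set, which is incompatible with the eikonal identity $|\nabla g|=1$ below), $V(r)=\mu(\{g>r\})=\mu(\{g\ge r\})$, so it suffices to bound $\mu(\{g\ge r\})$. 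The function $g$ is $1$‑Lipschitz, bounded by the inradius of $\Omega$, vanishes on $\partial\Omega$, and satisfies $|\nabla g|=1$ a.e.\ in $\Omega$; hence for any Lipschitz $\varphi$ with $\varphi(0)=0$ the function $\varphi\circ g$ lies in $H_0^1(\Omega)$ and is admissible in $\lambda_1^D(\Omega)=\inf_{0\neq u\in H_0^1(\Omega)}\bigl(\int_\Omega|\nabla u|^2\,d\mu\bigr)\big/\bigl(\int_\Omega u^2\,d\mu\bigr)$.

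The direct choice $u=e^{sg}-1$ is critical and yields only the vacuous estimate $\int_\Omega e^{sg}\,d\mu\ge\tfrac12$; choosing $u=e^{\sigma sg}-1$ with $\sigma<1$ instead gives $\int_\Omega e^{\sigma sg}\,d\mu\le(1-\sigma)^{-1}$, but Chebyshev together with optimization in $\sigma$ then delivers only $\mu(\{g\ge r\})\lesssim s r\,e^{1-sr}$, a polynomial factor off. To recover the sharp form I would fix $R>0$ and use the truncated test function $u_R:=\exp\bigl(s\min(g,R)\bigr)-1\in H_0^1(\Omega)$. Using $|\nabla g|=1$ a.e.\ one computes $\int_\Omega|\nabla u_R|^2\,d\mu=\lambda_1^D\int_{\{g<R\}}(u_R+1)^2\,d\mu=\lambda_1^D\bigl(\int_\Omega(u_R+1)^2\,d\mu-e^{2sR}V(R)\bigr)$, since $u_R+1=e^{sR}$ on $\{g\ge R\}$. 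Inserting this into $\lambda_1^D\int u_R^2\le\int|\nabla u_R|^2$, expanding $(u_R+1)^2=u_R^2+2u_R+1$ and cancelling $\int u_R^2$, everything collapses to
\begin{equation*}
e^{2sR}\,V(R)\ \le\ 1+2\int_\Omega u_R\,d\mu .
\end{equation*}

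Next I would rewrite the right‑hand side through $V$. By the layer‑cake formula and the substitution $a=e^{st}-1$ one checks $\int_\Omega u_R\,d\mu=s\int_0^R V(t)\,e^{st}\,dt$. Setting $\Phi(R):=\int_0^R V(t)e^{st}\,dt$ (absolutely continuous, $\Phi(0)=0$) and noting $e^{2sR}V(R)=e^{sR}\Phi'(R)$ a.e., the displayed inequality becomes $\Phi'(R)-2se^{-sR}\Phi(R)\le e^{-sR}$. Multiplying by the integrating factor $\exp\!\bigl(2e^{-sR}-2\bigr)$ and integrating over $[0,R]$ gives $\Phi(R)\le\tfrac1{2s}\bigl(e^{2-2e^{-sR}}-1\bigr)$, whence $e^{2sR}V(R)\le 1+2s\Phi(R)\le e^{2-2e^{-sR}}$, i.e.\ $V(R)\le\exp\bigl(2-2e^{-sR}-2sR\bigr)$. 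Finally the elementary inequality $1-x\le 2e^{-x}$ for all $x\ge 0$ (indeed $2e^{-x}-1+x$ attains its minimum $\ln 2$ at $x=\ln 2$) gives $2-2e^{-sR}-2sR\le 1-sR$, so $V(R)\le e^{1-sR}=\exp\bigl(1-\sqrt{\lambda_1^D(\Omega)}R\bigr)$, which is the lemma (in fact slightly more).

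The step I expect to be the real obstacle is getting past the critical exponent: with the untruncated exponential one only recovers a polynomially weaker tail, so the truncation $\min(g,R)$ is essential, and the point that makes the bookkeeping close is precisely that the boundary term $e^{2sR}V(R)$ produced by differentiating $\min(g,R)$ is exactly the quantity being estimated. The remaining ingredients — membership of $u_R$ in $H_0^1(\Omega)$ for a piecewise smooth boundary, the eikonal identity $|\nabla g|=1$ a.e., and the legitimacy of the layer‑cake computation and of differentiating $\Phi$ — are standard and can be handled by routine approximation.
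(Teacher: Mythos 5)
Your proof is correct, and it is worth flagging up front that this paper does not actually prove Lemma~\ref{Blem1}; it is quoted verbatim from \cite[Proposition 2.1]{FS}, so there is no in-paper argument to compare against. What you wrote is therefore a self-contained reconstruction, and it holds up: the truncation $u_R=\exp\bigl(s\min(g,R)\bigr)-1$ is genuinely needed (as you note, the untruncated choice $e^{\sigma s g}-1$ followed by Chebyshev only yields $V(r)\lesssim sr\,e^{1-sr}$, with a parasitic polynomial factor), and your accounting that the boundary contribution from the kink at $\{g=R\}$ is exactly $e^{2sR}V(R)$ is what makes the Rayleigh-quotient computation close. I re-derived each step: the identity $\int_\Omega|\nabla u_R|^2\,d\mu=\lambda_1^D\bigl(\int_\Omega(u_R+1)^2\,d\mu-e^{2sR}V(R)\bigr)$ follows from $|\nabla g|=1$ a.e.\ together with $\mu(\{g=R\})=0$; cancellation gives $e^{2sR}V(R)\le 1+2s\Phi(R)$ with $\Phi(R)=\int_0^R V(t)e^{st}\,dt$; the integrating factor $\exp(2e^{-sR}-2)$ for $\Phi'-2se^{-sR}\Phi\le e^{-sR}$ yields $e^{2sR}V(R)\le e^{2-2e^{-sR}}$, i.e.\ $V(R)\le\exp\bigl(2-2e^{-sR}-2sR\bigr)$; and the elementary bound $1-x\le 2e^{-x}$ (minimum of $2e^{-x}-1+x$ is $\ln 2>0$) reduces the exponent to $1-sR$. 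The side remarks are also sound: $g$ is $1$-Lipschitz with $|\nabla g|=1$ a.e.\ by Rademacher plus the one-sided difference quotient toward a nearest boundary point, and the Lipschitz composition $u_R=\varphi\circ g$ with $\varphi(0)=0$ lies in $H_0^1(\Omega)$ for a bounded domain with piecewise smooth boundary. One small comment: the hypothesis $\mu(\{g=R\})=0$, while true by the eikonal/level-set argument you give, is not actually needed for the argument to survive -- replacing the equality $\int_{\{g\ge R\}}(u_R+1)^2\,d\mu=e^{2sR}V(R)$ by the inequality $\ge e^{2sR}V(R)$ still delivers $e^{2sR}V(R)\le 1+2s\Phi(R)$, so the proof is robust even without that observation. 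In short: a correct argument, with a Gronwall/integrating-factor mechanism that recovers (in fact slightly improves) the exponential bound of \cite{FS}; since the paper offloads the proof to \cite{FS}, I cannot tell you whether it coincides with theirs, but it is a legitimate proof of the lemma.
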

 For a bounded domain $\Omega$ with a piecewise
 smooth boundary $\partial \Omega$ and a piecewise smooth open domain $U$ of $\partial \Omega$ we consider
 the following mixed eigenvalue problem:
 \begin{align*}
  \Delta \phi = -\lambda \phi  \text{ on } \Omega, \phi=0 \text{ on }
  \partial \Omega \setminus U , \text{ and }\frac{\partial
  \phi}{\partial \nu}=0 \text{ on }U,
  \end{align*}where $\nu$ is the outer unit normal. On this problem
  the eigenvalues consists
  of a discrete positive sequence
  (\cite[Theorem 1 of Chapter I]{C}). Let
  $\lambda_k^{U}(\Omega)$ be the $k$th eigenvalue of the problem.

  The proof of the following lemma is the same as the proof of
  \cite[Proposition 2.1]{FS} and we omit it.

\begin{lem}\label{Blem2}Let $\Omega$ be a (not necessarily convex) bounded domain in
 $\mathbb{R}^n$ with a piecewise smooth boundary and $U$ be a piecewise
 smooth open domain of $\partial \Omega$. Let $\mu$ denotes the
 uniform probability measure on $\Omega$. For any $r>0$ we have
\begin{align*}
    \mu(\Omega \setminus B_r(\partial \Omega\setminus U))\leq \exp\Big(1-\sqrt{\lambda_1^U(\Omega)}r\Big).
\end{align*}
\end{lem}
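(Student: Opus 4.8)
The plan is to reproduce the proof of the boundary concentration inequality \cite[Proposition 2.1]{FS} verbatim, the only change being that the Dirichlet boundary $\partial\Omega$ is replaced throughout by the ``Dirichlet part'' $\partial\Omega\setminus U$ of the boundary. Write $\lambda:=\lambda_1^U(\Omega)$ and set $\rho(x):=\dist(x,\partial\Omega\setminus U)$. Then $\rho$ is $1$-Lipschitz on the bounded domain $\Omega$, hence $\rho\in H^1(\Omega)$ with $|\nabla\rho|\le 1$ almost everywhere, and $\rho=0$ on $\partial\Omega\setminus U$; moreover $\Omega\setminus B_r(\partial\Omega\setminus U)=\{\rho>r\}$. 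The only input about $\lambda$ that I will use is its variational characterization: every $\phi\in H^1(\Omega)$ whose trace vanishes on $\partial\Omega\setminus U$ is an admissible test function for the mixed problem, so that $\lambda\int_\Omega\phi^2\,d\mu\le\int_\Omega|\nabla\phi|^2\,d\mu$. The point --- and the reason the argument of \cite{FS} transfers without change --- is that the Neumann part $U$ imposes no constraint at the level of test functions (the Neumann condition being natural), so functions of $\rho$ vanishing at $0$ remain admissible here exactly as in the purely Dirichlet setting.

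First I would insert, for each $c$ with $0<c<\sqrt{\lambda}$, the test function $\phi_c:=e^{c\rho}-1$ --- Lipschitz and vanishing on $\partial\Omega\setminus U$, hence admissible --- into the variational inequality. Using $|\nabla\phi_c|\le c\,e^{c\rho}$ almost everywhere and $e^{c\rho}=\phi_c+1$, this gives $\lambda\|\phi_c\|_{L^2(\mu)}^2\le c^2\|\phi_c+1\|_{L^2(\mu)}^2$. Taking square roots, applying the triangle inequality in $L^2(\mu)$, and using $\mu(\Omega)=1$ then yields the crux estimate $\|\phi_c\|_{L^2(\mu)}\le c/(\sqrt{\lambda}-c)$; this step is elementary.

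Next I would convert this into a tail bound. If $\sqrt{\lambda}\,r\le 1$ the right-hand side of the asserted inequality is at least $1$ and there is nothing to prove, so assume $\sqrt{\lambda}\,r>1$. On $\{\rho>r\}$ one has $\phi_c\ge e^{cr}-1>0$, so Chebyshev's inequality gives $\mu(\{\rho>r\})\le\|\phi_c\|_{L^2(\mu)}^2/(e^{cr}-1)^2\le c^2/\big((\sqrt{\lambda}-c)^2(e^{cr}-1)^2\big)$. Choosing $c:=\sqrt{\lambda}-1/r\in(0,\sqrt{\lambda})$ makes $\sqrt{\lambda}-c=1/r$ and $cr=\sqrt{\lambda}\,r-1$, so the right-hand side collapses to $\big((\sqrt{\lambda}\,r-1)/(e^{\sqrt{\lambda}\,r-1}-1)\big)^2$. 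To finish I would invoke the elementary inequality $y/(e^y-1)\le e^{-y/2}$ for $y\ge 0$ --- which follows from $e^y-1=e^{y/2}\cdot 2\sinh(y/2)\ge y\,e^{y/2}$ --- applied with $y:=\sqrt{\lambda}\,r-1$, obtaining $\mu(\Omega\setminus B_r(\partial\Omega\setminus U))\le e^{-(\sqrt{\lambda}\,r-1)}=\exp(1-\sqrt{\lambda}\,r)$.

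I do not expect a genuine obstacle, since every analytic ingredient is already present in \cite{FS}; the one thing that must be checked is that the form domain of the mixed eigenvalue problem is exactly $\{\phi\in H^1(\Omega):\phi=0\text{ on }\partial\Omega\setminus U\}$, with no condition imposed along $U$, so that $e^{c\rho}-1$ qualifies as a test function for $\lambda_1^U(\Omega)$. Under the piecewise smoothness hypotheses on $\partial\Omega$ and $U$ this is standard (cf.\ \cite{C}), and it is precisely this observation that licenses the assertion that ``the proof is the same as the proof of \cite[Proposition 2.1]{FS}''.
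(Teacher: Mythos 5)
Your proof is correct and is exactly the argument the paper has in mind: the paper omits the proof of Lemma \ref{Blem2}, stating only that it is the same as that of \cite[Proposition 2.1]{FS}, and your write-up carries out that proof (exponential test function $e^{c\rho}-1$ with $\rho=\dist(\cdot,\partial\Omega\setminus U)$ in the variational characterization, Chebyshev, and the choice $c=\sqrt{\lambda}-1/r$) with the Dirichlet part of the boundary replaced by $\partial\Omega\setminus U$. You correctly identify the one point that needs checking, namely that the form domain of the mixed problem imposes no constraint along the Neumann part $U$, so that functions vanishing only on $\partial\Omega\setminus U$ are admissible.
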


Since convex domains in $\mathbb{R}^n$ enjoy the CD$(0,n)$ condition in
the sense of Lott-Villani-Sturm (\cite{LV,St1,St2}) and CD$(0,n)$ spaces satisfy the
Bishop-Gromov volume comparison theorem (\cite[Theorem 2.3]{St2}) we obtain the following
lemma. Here we give a direct proof so that this paper become self-contained. 
\begin{lem}[{Bishop-Gromov inequality}]Let $\Omega$ be a convex domain
 in $\mathbb{R}^n$. Then for any $x\in \Omega$ and any $R>r>0$ we have
 \begin{align}\label{BGI}
 \frac{ \vol (B(x,r)\cap \Omega)}{\vol (B(x,R)\cap \Omega)}\geq \Big(\frac{r}{R}\Big)^n.
  \end{align}
 \begin{proof}Recall that the Brunn-Minkowski inequality (\cite{G}) states that for
  any two
  measurable subsets $A$, $B$ in $\mathbb{R}^n$ and $t\in [\,0,1 \,]$ we have
  \begin{align}\label{BMI}
   \vol((1-t)A+tB)^{\frac{1}{n}}\geq (1-t)\vol (A)^
   {\frac{1}{n}} + t\vol (B)^
   {\frac{1}{n}}, 
   \end{align}where
  \begin{align*}
   (1-t)A+tB:=\{ (1-t)x+ty \mid x\in A, y\in B\}.
   \end{align*}
  The inequality (\ref{BGI}) follows by putting $A:=\{x\}$,
  $B:=B(x,R)\cap \Omega$, and $t:=\frac{r}{R}$ in (\ref{BMI}). This
  completes the proof.
  \end{proof}
 \end{lem}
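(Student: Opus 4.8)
The plan is to bypass the Brunn--Minkowski inequality and argue directly by a homothety (scaling) argument, using convexity only to guarantee that a shrunk copy of $B(x,R)\cap\Omega$ still lies inside $\Omega$. Fix $x\in\Omega$ and $R>r>0$, set $t:=r/R\in(0,1)$, and write $K:=B(x,R)\cap\Omega$. Since $B(x,R)$ and $\Omega$ are both convex, $K$ is convex, and $x\in K$.

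The first step is to consider the homothety $T\colon\mathbb{R}^n\to\mathbb{R}^n$ centered at $x$ with ratio $t$, namely $T(y):=x+t(y-x)=(1-t)x+ty$, and to check the inclusion
\begin{align*}
T(K)\subseteq B(x,r)\cap\Omega.
\end{align*}
For the ball part this is immediate: if $y\in K\subseteq B(x,R)$ then $|T(y)-x|=t|y-x|\leq tR=r$. For the membership in $\Omega$ we invoke convexity: $T(y)$ lies on the segment joining $x\in\Omega$ and $y\in\Omega$, hence $T(y)\in\Omega$; equivalently, $T(K)\subseteq K$ because $K$ is convex and contains $x$. This is the only place convexity is used, and it is essential — for a general domain $B(x,R)\cap\Omega$ need not be star-shaped with respect to $x$, and the estimate can genuinely fail.

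The second step is purely measure-theoretic: $T$ is an affine map whose linear part is $t\cdot\id$, so it multiplies Lebesgue measure by $t^{n}$. Hence
\begin{align*}
\vol(B(x,r)\cap\Omega)\ \geq\ \vol(T(K))\ =\ t^{n}\,\vol(K)\ =\ \Big(\frac{r}{R}\Big)^{n}\vol(B(x,R)\cap\Omega),
\end{align*}
which is precisely (\ref{BGI}). There is no real obstacle here; the only point demanding care is the inclusion in the first step, and even that is a one-line consequence of convexity.

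For comparison with the author's route: one may phrase the identical computation via Brunn--Minkowski applied to $A:=\{x\}$ and $B:=K$ with parameter $t$, since $(1-t)A+tB=T(K)$ and (\ref{BMI}) then reads $\vol(T(K))^{1/n}\geq t\,\vol(K)^{1/n}$; combined with $T(K)\subseteq B(x,r)\cap\Omega$ this again yields the claim. The homothety argument simply makes explicit that only the trivial scaling behaviour of volume — not the full strength of Brunn--Minkowski — is needed once one knows that $K$ is convex.
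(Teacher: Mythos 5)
Your proof is correct. Strictly speaking it takes a more elementary route than the paper: the paper invokes the Brunn--Minkowski inequality with $A=\{x\}$, $B=B(x,R)\cap\Omega$, whereas you observe that for a singleton $A$ the set $(1-t)A+tB$ is just the homothetic image $T(K)$, so its volume is \emph{exactly} $t^n\vol(K)$ by the scaling of Lebesgue measure --- no inequality of Brunn--Minkowski type is needed. The two arguments therefore reduce to the same computation, but yours correctly identifies that the only nontrivial content is the inclusion $T(K)\subseteq B(x,r)\cap\Omega$, which follows from convexity (star-shapedness of $K$ with respect to $x$); this step is used implicitly but not spelled out in the paper's proof, since Brunn--Minkowski by itself only bounds $\vol((1-t)\{x\}+tK)$ and one still must know that this set sits inside $B(x,r)\cap\Omega$. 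In short: your version buys self-containedness and makes the role of convexity explicit, while the paper's version is shorter at the cost of citing a theorem whose full strength is not actually used.
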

\begin{proof}[Proof of Lemma \ref{keylemma}]Let $B_i:=B(x_i,r/8)\cap \overline{\Omega}$. For any positive number $r'<\frac{1}{8}\min_{i\neq
 j}\dist(B_i,B_j)$ we set $\widetilde{B}_i:=B_{r'}(B_i)\cap \overline{\Omega}$. We also set $A:=\bigcup_{i=0}^l \widetilde{B}_i$. Then the
 (usual) domain monotonicity gives

 \begin{align*}
 \lambda_l^N(\Omega)\leq
\left\{
\begin{array}{ll}
 \lambda_{l+1}^{A \cap \partial \Omega}(A) &  \ \ (A\cap\partial \Omega
  \neq \emptyset), \\\
\lambda_{l+1}^{D}(A) &  \ \ (A\cap \partial \Omega =\emptyset) . \\
\end{array}
\right.
\end{align*}Putting $\nu_1(\widetilde{B}_i):= \lambda_1^{\widetilde{B}_i\cap
 \partial \Omega}(\widetilde{B}_i)$ if $\widetilde{B}_i\cap
 \partial \Omega\neq \emptyset$ and $\nu_1(\widetilde{B}_i):= \lambda_1^{D}(\widetilde{B}_i)$ if $\widetilde{B}_i\cap
 \partial \Omega= \emptyset$ we then get
\begin{align*}
    \lambda_l^N(\Omega)\leq \max_{i=0,1,\cdots,l} \nu_1(\widetilde{B}_i).
\end{align*}Suppose that the maximum of the right-hand side is attained
 by $\nu_1(\widetilde{B}_{i_0})$. Let $\mu_{i_0}$ be the uniform probability measure on $B_{i_0}$. 
 The Bishop-Gromov inequality implies that
\begin{align*}
    \mu_{i_0}(B_{i_0})\geq \Big(\frac{\frac{r}{8}}{\frac{r}{8}+r'}\Big)^n \geq \frac{1}{
    5^n}.
\end{align*}Thus Lemmas \ref{Blem1} and \ref{Blem2} imply that 
\begin{align*}
 \exp\Big(1-\sqrt{\nu_1(\widetilde{B}_{i_0})}s\Big)\geq
\left\{
\begin{array}{ll}
 \mu_{i_0}(\widetilde{B}_{i_0}\setminus B_s(\partial
  \widetilde{B}_{i_0}\setminus \partial \Omega)) &  \ \ (\widetilde{B}_{i_0}\cap\partial \Omega
  \neq \emptyset), \\\
 \mu_{i_0}(\widetilde{B}_{i_0}\setminus B_s(\partial \widetilde{B}_{i_0})) &  \ \ (\widetilde{B}_{i_0}\cap \partial \Omega =\emptyset) . \\
\end{array}
\right.
\end{align*}This shows that if $\exp
 (1-\sqrt{\nu_1(\widetilde{B}_{i_0})}s )<1/5^n$ then $r'\leq s$. 
That is, as long as
 \begin{align*}
  \frac{1}{\sqrt{\nu_1(\widetilde{B}_{i_0})}}(1+n\log 5)<s,
  \end{align*}we have $r'\leq s$. Therefore we get
 \begin{align*}
 r' \leq \frac{1}{\sqrt{\nu_1(\widetilde{B}_{i_0})}}(1+n\log 5)\leq \frac{4n}{\sqrt{\lambda_l^N(\Omega)}}.
  \end{align*}Since $r'$ can be sufficiently close to $\frac{1}{8}\min_{i\neq
 j}\dist(B_i,B_j)$ and $\min_{i\neq
 j}\dist(B_i,B_j)$ is at least $3r/4$ we obtain the lemma. 
 \end{proof}

\begin{proof}[Proof of Theorem \ref{DMthm}]Let
 $R:=46n/\sqrt{\lambda_k^N(\Omega')}$. We take a maximal $R$-separated net $\{x_i\}_{i=0}^l$ in
 $\Omega'$. If $l\geq k$ then by Lemma \ref{keylemma}
 we have
 \begin{align*}
  \frac{46n}{\sqrt{\lambda_k^N(\Omega')}}=R\leq
  \frac{45n}{\sqrt{\lambda_l^N(\Omega')}}\leq  \frac{45n}{\sqrt{\lambda_k^N(\Omega')}}.
  \end{align*}This is a
 contradiction. Hence $l\leq k-1$.
 
  Let $y_0, y_1,y_2,\cdots,y_l$ be maximal $R$-separated points in
   $\Omega'$, where $l\leq k-1$. By the
 maximality we have $\Omega'\subseteq
   \bigcup_{i=0}^{l} B(y_i,R)$. If $\{
   \Omega_i' \}_{i=1}^{l}$ is the Voronoi partition associated with
   $\{y_i\}$ then we have $\diam \Omega_i'\leq 2R$. Setting
   $\Omega_i:=\Omega_i'\cap \Omega$ we get
   $\Omega=\bigcup_{i=0}^l\Omega_i$ and $\diam \Omega_i\leq 2R$. Since each
   $\Omega_i$ is convex, Proposition \ref{emil:prop} gives
   $\lambda_1^N(\Omega_i)\geq \pi^2/(2R)^2$. Applying Proposition \ref{lower} to
   the covering $\{\Omega_i\}$ we obtain
   \begin{align*}
    \lambda^N_{k}(\Omega)\geq \lambda^N_l(\Omega)\geq \pi^2/\{(2 R)^2\}\geq
    \lambda^N_k(\Omega')/(92n)^2,
    \end{align*}which yields the conclusion of the theorem. This completes the proof.
\end{proof}

\section{An upper bound for Neumann eigenvalues}\label{UPNE}
Let $\Omega$ be a bounded domain with piecewise smooth boundary in a
Riemannian manifold. We denote $\lambda_k^N(\Omega)$ the $k$-th
positive Neumann eigenvalue of the Laplacian on $\Omega$, counted with
multiplicities. Applying the method of the previous section we prove the
following. Recall that $\Omega$ is \emph{convex} iff any minimizing
geodesic connecting two points in $\Omega$ is included in $\Omega$.
 \begin{thm}\label{UPP}There is a universal constant $C>0$ satisfying the following. Let $M$ be an $n$-dimensional complete Riemnnian manifold of
  nonnegative Ricci curvature and $\Omega$ be a bounded convex domain in $M$
   with piecewise smooth boundary. Then we have
   \begin{align}\label{UPB}
    \lambda_k^N(\Omega)\leq C\Big(\frac{k}{\omega_n \vol \Omega}\Big)^{\frac{2}{n}},
    \end{align}where $\omega_n$ is the volume of an $n$-dimensional
  Euclidean unit ball.
  \end{thm}
  One can take $C=10(46)^2$ in the above theorem.
  \begin{proof}Note first that Lemma \ref{keylemma} also holds in our
   nonlinear setting since the assumption of the nonnegativity of Ricci
   curvature and convexity of $\Omega$ imply the Bishop-Gromov
   inequality. Hence as in the proof of Theorem \ref{DMthm} at most $k$ balls of radius
   $46 n/\sqrt{\lambda_k^N(\Omega)}$ covers $\Omega$. Since the Bishop
   inequality gives $\vol (B(x,R)\cap \Omega) \leq w_n R^n$ for any $x\in
   \Omega$ and $R>0$ (this follows directly from (\ref{BGI}) by letting
   $r \rightarrow 0$) we have
   \begin{align*}
    \vol \Omega \leq k \omega_n \Big(\frac{46n}{\sqrt{\lambda_k^N(\Omega)}}\Big)^n.
    \end{align*}Thus $\Gamma(\frac{n}{2}+1)^{\frac{2}{n}} \sim n$ shows (\ref{UPB}). This completes the proof.
   \end{proof}
  Let us review the previous known results relating with Theorem
  \ref{UPP}. P\'olya conjectured that 
   \begin{align}\label{PolC}
    \lambda_k^N(\Omega)\leq 4\pi^2 \Big(\frac{k}{\omega_n \vol \Omega}\Big)^{\frac{2}{n}}
    \end{align}holds for any $k$ and any bounded domain $\Omega$ in
    $\mathbb{R}^n$ (\cite[Chapter XIII]{P}), that is, the principal term of the Weyl law
provides a bound for the eigenvalues. Theorem \ref{UPP} says that
    the conjecture is affirmative up to
    a multiplicative constant factor under the convexity assumption.

    The
    conjecture is affirmative in the case of $k=1,2$. The case of $k=1$
    was proved by Weinberger removing some condition supposed by Szeg\"o (\cite{W,Sz}). The case of $k=2$ was
    solved affirmatively bv Bucur and Henrot (\cite{BH}). For
    general $k$ P\'olya (\cite{P2}) proved periodic tiling domains satisfies
    (\ref{PolC}) and later Kellner (\cite{K}) removed the periodic
    condition. Recently Filonov, Levitin, Polterovich and Sher showed the P\'olya
    conjecture is affirmative for planar discs and planar sectors (\cite{FLPS}).

    Using harmonic analysis Kr\"{o}ger (\cite[Corollary 2]{Kr}) proved that
    \begin{align*}
     \lambda_k^N(\Omega)\leq (2\pi)^2\Big(\frac{n+2}{2}\Big)^{\frac{2}{n}}
     \Big(\frac{k}{\omega_n \vol \Omega}\Big)^{\frac{2}{n}}\lesssim \Big(\frac{k}{\omega_n \vol \Omega}\Big)^{\frac{2}{n}}
     \end{align*}holds for any (not necessarily convex) bounded domain $\Omega$ in $\mathbb{R}^n$
     with piecewise smooth boundary, which
     is sharper than our estimate (\ref{UPB}).

     For a domain in a Riemannian manifold, Korevaar  (\cite[(0.3) Theorem]{Ko}) obtained much more
     general and deep result. For example from his result one can see that
     \begin{align*}
      \lambda_k^N(\Omega)\leq c_n \Big(\frac{k}{\omega_n \vol \Omega}\Big)^{\frac{2}{n}}
      \end{align*}holds for a (not necessarily convex) bounded domain $\Omega$ in a manifold with
      non-negative Ricci curvature, where $c_n>0$ is a numerical
      constant depending only on $n$. He also got an estimate for a domain
      in a manifold with a lower Ricci curvature bound. The constant $c_n$ comes from an
      upper bound of a number of balls of (some fixed) radius $5^{-1}R$ that covers
      an annulus $B(x,2R)\setminus B(x,R)$. Using the Bishop-Gromov
      inequality the upper bound can be
      estimated from above by an exponential in $n$. 

      Colbois and Maerten (\cite[Theorem 1.3]{CM}) showed that for each bounded domain $\Omega$
     in a complete Riemannian manifold with Ricci curvature bounded
     below by $-(n-1)a^2$, $a\geq 0$ we have
     \begin{align*}
      \lambda_k^N(\Omega)\leq A_n a^2
      +B_n\Big(\frac{k}{\vol \Omega}\Big)^{\frac{2}{n}}
      \end{align*}for some numerical constants $A_n, B_n>0$ depending
      only on $n$. The constant $B_n$ is
      depending on the covering number $C(r)$ such that each ball of
      radius $4r$ in $M$ may be covered by $C(r)$ balls of some fixed
      radius $r$. The covering number can be estimated from above by an exponential in
      $n$ via the Bishop-Gromov inequality.

    The same proof applies for an upper bound for eigenvalues of the
    Laplacian on closed Riemannian manifolds of nonnegative Ricci
    curvature. Since there is no boundary one can give a simpler proof of
    the corresponding statement of Lemma \ref{keylemma} as follows.

    \begin{lem}Let $M$ be an $n$-dimensional closed Riemannian manifold of nonnegative
     Ricci curvature. Given $r>0$ suppose that $\{
 x_i\}_{i=0}^{l}$ is an $r$-separated set in $M$. Then
\begin{align*}
r\leq \frac{8n}{\sqrt{\lambda_l(M)}},
\end{align*}where $\lambda_l(M)$ is the $l$th nontrivial eigenvalue of
     the Laplacian on $M$, counted with mltiplicities.
     \begin{proof} Let $B_i:=B(x_i,r/2)$. 
     Since $B_i\cap B_j=\emptyset$ for distinct $i,j$ the (usual) domain monotonicity yields
\begin{align*}
    \lambda_l(M)\leq
 \lambda_{l+1}^{D}\Big(\bigcup_{i=0}^l{B}_i \Big)\leq \max_{i=0,1,\cdots,l} \lambda_1^{D}({B}_i).
\end{align*}By virtue of Cheng's eigenvalue comparison theorem
      (\cite[Theorem 1.1]{Ch}) we have
      \begin{align*}
       \lambda_1^D(B_i)\leq \lambda_1^D(\{x\in \mathbb{R}^n \mid
       \|x\|\leq r/2\})= \Big(\frac{r}{2}\Big)^{-2}(j_{\frac{n}{2}-1,1})^2,
       \end{align*}where $j_{\frac{n}{2}-1,1}$
     denotes the first positive zero of the Bessel function
     $J_{\frac{n}{2}-1}$ (\cite[Theorem 4 of Chapter II]{C}).
     Since $j_{\frac{n}{2}-1,1} \leq 2n$ (\cite[P 486 (5)]{watson}) we thereby
     get
      \begin{align*}
        \lambda_l(M)\leq\Big(\frac{r}{2}\Big)^{-2}(j_{\frac{n}{2}-1,1})^2\leq 16r^{-2}n^2.
       \end{align*}This completes the proof.
      \end{proof}
     \end{lem}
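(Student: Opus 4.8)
The plan is to imitate the structure of the proof of Lemma~\ref{keylemma}, but to exploit the absence of a boundary to replace the boundary concentration machinery (Lemmas~\ref{Blem1} and~\ref{Blem2}) with Cheng's eigenvalue comparison theorem, which is both cleaner and gives the correct dimensional dependence. Concretely, I would start from a maximal $r$-separated net $\{x_i\}_{i=0}^{l}$ in $M$ and form the disjoint balls $B_i:=B(x_i,r/4)$; these are pairwise disjoint precisely because the $x_i$ are $r$-separated, so $\dist(x_i,x_j)\geq r$ forces $B(x_i,r/4)\cap B(x_j,r/4)=\emptyset$.

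The second step is to invoke the usual domain monotonicity for Dirichlet eigenvalues: since $\bigsqcup_{i=0}^{l}B_i$ is an open subdomain of $M$ consisting of $l+1$ disjoint pieces, one has $\lambda_l(M)\leq \lambda_{l+1}^{D}\!\big(\bigsqcup_{i=0}^{l}B_i\big)$, and because the Dirichlet spectrum of a disjoint union is the union of the Dirichlet spectra of the pieces, the $(l+1)$-st Dirichlet eigenvalue of the union is at most $\max_i \lambda_1^{D}(B_i)$ (each piece contributes at least its own first Dirichlet eigenvalue, so among the $l+1$ pieces the $(l+1)$-st smallest value is bounded by the largest of the first eigenvalues). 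This is the same bookkeeping as in the proof of Lemma~\ref{keylemma}, only simpler because there is no mixed boundary condition to track.

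The third step is the analytic input: Cheng's comparison theorem for manifolds with $\ric\geq 0$ bounds $\lambda_1^{D}(B(x_i,r/4))$ from above by the first Dirichlet eigenvalue of the Euclidean ball of the same radius $r/4$, namely $(r/4)^{-2}(j_{n/2-1,1})^2$, where $j_{n/2-1,1}$ is the first positive zero of the Bessel function $J_{n/2-1}$. Finally, one uses the classical asymptotic $j_{n/2-1,1}\sim n$ (from Watson's treatise) to conclude $\lambda_l(M)\leq (r/4)^{-2}(j_{n/2-1,1})^2\lesssim r^{-2}n^2$, which rearranges to $r\lesssim n/\sqrt{\lambda_l(M)}$.

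I do not anticipate a serious obstacle here: each ingredient is a cited black box (disjointness from separation, additivity of Dirichlet spectra over disjoint unions, Cheng's theorem, the Bessel zero asymptotic), and the only thing to be careful about is the indexing in the domain-monotonicity step—making sure the $(l+1)$-st eigenvalue of the $(l+1)$-fold disjoint union is genuinely controlled by $\max_i\lambda_1^D(B_i)$ rather than by some smaller-index eigenvalue. That point is exactly as in the proof of Lemma~\ref{keylemma}, so it causes no trouble. If anything, the mildly delicate part is that one needs $\{x_i\}$ to be $r$-separated (not merely a covering) to get disjointness of the quarter-radius balls; the covering property of the maximal net is not used at all in this version, in contrast to the proof of Lemma~\ref{keylemma}.
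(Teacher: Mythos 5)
Your proposal is correct and reproduces the paper's own proof step for step: disjointness of the quarter-radius balls from $r$-separation, domain monotonicity giving $\lambda_l(M)\leq\lambda_{l+1}^D\bigl(\bigsqcup_i B_i\bigr)\leq\max_i\lambda_1^D(B_i)$, Cheng's comparison theorem, and the asymptotic $j_{n/2-1,1}\sim n$. Your closing observation that only the $r$-separation (not the maximality/covering property) of the net is actually used here is also accurate.
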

     The same proof of Theorem \ref{UPP} thus implies the following.
     \begin{thm}\label{UPP2}There is a universal constant $C>0$ satisfying the following. Let $M$ be an $n$-dimensional closed Riemnnian manifold of
  nonnegative Ricci curvature. Then we have
   \begin{align}\label{UPB2}
    \lambda_k(M)\leq C\Big(\frac{k}{\omega_n \vol
    M}\Big)^{\frac{2}{n}}.
   \end{align}
     \end{thm}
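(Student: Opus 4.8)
The plan is to run the proof of Theorem~\ref{UPP} essentially verbatim, with the boundary-concentration Lemma~\ref{keylemma} replaced by its boundary-free counterpart proved just above (the one using Cheng's comparison theorem and the asymptotics $j_{\frac n2-1,1}\sim n$ for the Bessel zero). Fix a large absolute constant $c>0$, to be pinned down at the end, and set $R:=cn/\sqrt{\lambda_k(M)}$. Take a maximal $R$-separated net $\{x_i\}_{i=0}^{l}$ in $M$. If $l\geq k$, the Lemma gives $R\lesssim n/\sqrt{\lambda_l(M)}\leq n/\sqrt{\lambda_k(M)}$, where the last step uses that the eigenvalues are nondecreasing; for $c$ large this contradicts $R=cn/\sqrt{\lambda_k(M)}$, so $l\leq k-1$. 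By maximality of the net, $M=\bigcup_{i=0}^{l}B(x_i,R)$; that is, $M$ is covered by at most $k$ balls of radius $R$.

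Next I would compare $\vol M$ with the total volume of these balls. Since $\ric_M\geq 0$, the Bishop inequality gives $\vol B(x,R)\leq \omega_n R^n$ for every $x\in M$ and every $R>0$ (the nonnegatively curved $M$ satisfies the Bishop--Gromov inequality, so this is again the $r\to 0$ limit of the relative estimate). Summing over the covering balls,
\[
\vol M\ \leq\ \sum_{i=0}^{l}\vol B(x_i,R)\ \leq\ k\,\omega_n R^n\ =\ k\,\omega_n\Big(\frac{cn}{\sqrt{\lambda_k(M)}}\Big)^{n},
\]
and solving for $\lambda_k(M)$ gives $\lambda_k(M)\leq (cn)^2\big(k\omega_n/\vol M\big)^{2/n}$.

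It remains to put this in the asserted form, i.e. to absorb the factor $n^2$. Writing $\omega_n=\pi^{n/2}/\Gamma(\tfrac n2+1)$ and using $\Gamma(\tfrac n2+1)^{2/n}\sim n$, we get $\omega_n^{4/n}\sim n^{-2}$, hence $(cn)^2\omega_n^{4/n}\lesssim 1$ and therefore
\[
\lambda_k(M)\ \leq\ (cn)^2\,\omega_n^{4/n}\Big(\frac{k}{\omega_n\vol M}\Big)^{2/n}\ \lesssim\ \Big(\frac{k}{\omega_n\vol M}\Big)^{2/n},
\]
which is (\ref{UPB2}). I do not expect a genuine obstacle here: the two places that need care are the covering step (which rests on the already-proved Lemma and on monotonicity of the eigenvalues) and this final cancellation — and it is precisely the nonnegativity of the Ricci curvature, through the \emph{sharp} Euclidean constant $\omega_n$ in the Bishop bound, that lets the crude dimensional factor $n^2$ disappear and upgrades the estimate to Weyl order.
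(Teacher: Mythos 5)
Your proposal is correct and matches the paper's argument exactly: the paper proves Theorem \ref{UPP2} by running the proof of Theorem \ref{UPP} verbatim with the closed-manifold version of Lemma \ref{keylemma} (the Cheng-comparison lemma) in place of the boundary-concentration version, then using the covering estimate, the Bishop inequality $\vol B(x,R)\leq \omega_n R^n$, and the asymptotic $\Gamma(\tfrac n2+1)^{2/n}\sim n$ to absorb the $n^2$. Nothing to add.
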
The above constant $C$ can be taken as $C=640$. 

     Buser proved in \cite[Satz 7]{B2} (see also \cite{B3}) that
     \begin{align*}
      \lambda_k(M)\leq \frac{(n-1)^2}{4}a^2 + c_n \Big(\frac{k}{\vol M}\Big)^{\frac{2}{n}}
      \end{align*}for a closed Riemannian manifold with Ricci curvature
      bounded below by $-(n-1)a^2$, where $c_n\sim n$. Li and Yau gave a
      similar and shaper estimate for a closed manifold with a lower Ricci
      curvature bound by using a covering and comparison method. In particular they gave an
      upper bound
      \begin{align*}
       \lambda_k(M)\leq
       n(n+4)\omega_n^{\frac{4}{n}}\Big(\frac{k+1}{\omega_n\vol
       M}\Big)^{\frac{2}{n}}\lesssim  \Big(\frac{k}{\omega_n \vol M}\Big)^{\frac{2}{n}}
       \end{align*}for a closed manifold $M$ with non-negative Ricci
       curvature (\cite[Theorem 17]{LY}). Their estimate is the same with our estimate
       (\ref{UPB2}) in order. Our method looks somewhat simpler than
       their method. 
\subsection*{{\rm Acknowledgements}}
The author would like to Misha Muraviev for pointing out the error of
the first version of the paper. He also thanks anonymous referees for
suggestion.

\end{document}